\documentclass[12pt,a4paper]{amsart}
\usepackage{graphics}
\usepackage{epsfig}
\usepackage[all]{xy}
\usepackage{graphicx}
\theoremstyle{plain}
\usepackage{amssymb}
\usepackage[english]{babel}
\advance\hoffset-20mm \advance\textwidth40mm

\newtheorem{theorem}{Theorem}
\newtheorem{lemma}{Lemma}
\newtheorem*{theo*}{Theorem}

\newtheorem{corollary}{Corollary}
\theoremstyle{definition}

\newtheorem*{definition*}{Definition}

\newtheorem{remark}{Remark}

\usepackage[hyperindex,bookmarksnumbered,unicode]{hyperref}

\begin{document}
	\sloppy
	\title[On Clifford Algebras of Infinite Dimensional Vector Spaces]
	{On Clifford Algebras of Infinite Dimensional Vector Spaces}
	\author
	{Oksana Bezushchak}
	\address{Oksana Bezushchak: Faculty of Mechanics and Mathematics, Taras Shevchenko National University of Kyiv, Volodymyrska, 60, Kyiv 01033, Ukraine}
	\email{bezushchak@knu.ua}
	
	\date{August 14, 2024.}
	\keywords{Clifford algebra, derivation, automorphism, locally matrix algebra}
	\subjclass[2020]{15A66, 16W20}

	\maketitle
	
	\begin{abstract}
		We describe  derivations  of the Clifford algebra of a nondegenerate quadratic form on a countable dimensional vector space over an algebraically closed field of characteristic not equal to $2$. We also construct an algebraic  automorphism of the Clifford algebra of a positive definite quadratic form  that is not continuous. 	
	\end{abstract}


	\section*{Introduction}

Let $\mathbb{F}$ be a field of characteristic not equal to $2$. Let $V$ be a vector space over $\mathbb{F}$. A mapping $f: V \times V \rightarrow \mathbb{F}$ is called a \emph{quadratic form} if

(1) $f(\lambda v) = \lambda^2 f(v)$,

(2) $f(v, w) = f(v+w) - f(v) - f(w)$ is a bilinear form.

A quadratic form $f$ is \emph{nondegenerate} if the bilinear form $f(v, w)$ is nondegenerate.

The \emph{Clifford algebra} $\mathcal{C}\ell(V,f)$ is generated by the vector space $V$ and unit $1$ with defining relations $v^2 = f(v)\cdot 1$ for $v \in V$. If $\{v_i\}_{i\in I}$ is a basis of the vector space $V$ and the set of indices $I$ is ordered, then the set of ordered products $ v_{i_1}\cdots v_{i_k} , $ where $ i_1 < i_2 < \ldots < i_k,$ and $1$ (viewed as the empty product), form a basis of the Clifford algebra $\mathcal{C}\ell(V,f)$.

The Clifford algebra $\mathcal{C}\ell(V,f)$ is graded by the cyclic group of order $2$, expressed as: $$\mathcal{C}\ell(V,f) = \mathcal{C}\ell(V,f)_{\overline{0}} + \mathcal{C}\ell(V,f)_{\overline{1}}, $$ where $$ \mathcal{C}\ell(V,f)_{\overline{0}} = \mathbb{F}\cdot 1 + \sum_{n = 1}^{\infty} \underbrace{V \cdots V}_{2n}, \quad \mathcal{C}\ell(V,f)_{\overline{1}} =  \sum_{n = 0}^{\infty} \underbrace{V \cdots V}_{2n+1}.$$

Suppose that  the ground field $\mathbb{F}$ is algebraically closed, and the quadratic form is nondegenerate. If $\dim_{\mathbb{F}} V =d$ is an even integer,  then the Clifford algebra $\mathcal{C}\ell(V,f)$ is isomorphic to the algebra $M_{2^{\frac{d}{2}}} (\mathbb{F})$ of $2^{\frac{d}{2}} \times 2^{\frac{d}{2}}$ matrices over $\mathbb{F}$. If $d$ is odd, then  $$\mathcal{C}\ell(V,f) \cong M_{2^{\frac{d-1}{2}}} (\mathbb{F}) \oplus M_{2^{\frac{d-1}{2}}} (\mathbb{F});$$ see \cite{Jacobson_1}.

An algebra $A$ over a field $\mathbb{F}$ is called a \emph{locally matrix algebra} if an arbitrary finite collection of elements of $A$ lies  in a subalgebra $B \leq A$ such that  $B$ is isomorphic to the algebra $M_n(\mathbb{F})$ of $n \times n$ matrices over $\mathbb{F}$ for some $n \geq 1$. An algebra $A$ is \emph{unital} if it contains the unit $1$. 

In \cite{BezOl}, we proved  that for an infinite dimensional vector space $V$ over an algebraically closed field $\mathbb{F}$ with a nondegenerate quadratic form $f:V  \rightarrow \mathbb{\mathbb{F}}$, the Clifford algebra $\mathcal{C}\ell(V,f)$ is a unital locally matrix algebra.

G.~K\"{o}the \cite{Koethe} proved that a countable dimensional unital locally matrix algebra is isomorphic to a tensor product of matrix algebras. For locally matrix algebras of uncountable dimension, this is no longer true; see~\cite{BezOl_2,Kurosh}.

We begin with an explicit decomposition of the Clifford algebra $\mathcal{C}\ell(V,f)$, where $\dim_{\mathbb{F}} V=\aleph_0$, into a tensor product of matrix algebras; see Sec.~\ref{Sec1_1}. Given that the vector space is countably dimensional, we can, without loss of generality, assume that the set $I$ is the set of positive integers  $\mathbb{N}.$

Let $v_i,$ $i \in \mathbb{N},$ be an orthonormal basis of the space $V$, where $v_i v_j + v_j v_i = 2\delta_{ij}$ (here, $ \delta_{ij}$    is the Kronecker delta);  let $0=n_0< n_1 <  \cdots$ be an increasing sequence of even numbers. Consider the subspace $V_i$ for  $i \in \mathbb{N},$  spanned by $v_{n_{i-1}+1},\ldots , v_{n_i}$. The subalgebra of $\mathcal{C}\ell(V,f)$ generated by  $V_i$ is isomorphic to $\mathcal{C}\ell(V_i,f)$. The algebra $\mathcal{C}\ell(V_i,f)$ is  $\mathbb{Z}/ 2\mathbb{Z}$-graded:  $$\mathcal{C}\ell(V_i,f)=\mathcal{C}\ell(V_i,f)_{\overline{0}}+\mathcal{C}\ell(V_i,f)_{\overline{1}}.$$ Let $c_i = v_1 \cdots v_{n_i}$. Now, consider the sequence of subalgebras: $$A_1=\mathcal{C}\ell(V_1,f), \quad A_i=\mathcal{C}\ell(V_i,f)_{\overline{0}}+c_i\,  \mathcal{C}\ell(V_i,f)_{\overline{1}} $$ of the algebra $\mathcal{C}\ell(V,f)$.

\begin{theorem}\label{TH_1}  $A_i\cong \mathcal{C}\ell(V_i,f)$ \   for each   \  $i\in \mathbb{N}$; \ \ \  $[A_i, A_j] = (0)$ for $i,j\in N,$ $i \neq j$; \ \ \ and $$\mathcal{C}\ell(V,f) \cong \bigotimes_{i \in \mathbb{N}} A_i.$$ \end{theorem}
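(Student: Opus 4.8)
The plan is to exhibit, within each block $i$, a family of \emph{twisted} generators that satisfy honest Clifford relations, to check that they generate $A_i$, and then to assemble the blocks by a finite-dimensional comparison. The computations all rest on the commutation behaviour of the elements $c_i=v_1\cdots v_{n_i}$. Since every $n_i$ is even, moving a basis vector across this product gives $v_kc_i=-c_iv_k$ when $k\le n_i$ and $v_kc_i=c_iv_k$ when $k>n_i$; moreover $c_i^2=(-1)^{n_i/2}\cdot 1$ is a nonzero scalar, and $c_ic_j=c_jc_i$ for all $i,j$ (for $i<j$ write $c_j=c_i\,v_{n_i+1}\cdots v_{n_j}$ and use that $c_i$ commutes with each $v_m$, $m>n_i$). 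I would establish these relations first.

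For the isomorphism $A_i\cong\mathcal{C}\ell(V_i,f)$ I would put $w_k=c_iv_k$ for $n_{i-1}<k\le n_i$. Using $v_kc_i=-c_iv_k$ one computes $w_kw_l+w_lw_k=-2c_i^2\,\delta_{kl}$, so the $w_k$ pairwise anticommute and square to the nonzero scalar $-c_i^2$. Because $\mathbb{F}$ is algebraically closed I can rescale the $w_k$ so that $w_k^2=1$, producing orthonormal Clifford generators of a nondegenerate quadratic form on a space of dimension $d_i=n_i-n_{i-1}$. Expanding products of the $w_k$ and collecting the scalar $c_i^2$ shows that their even products reproduce $\mathcal{C}\ell(V_i,f)_{\overline 0}$ and their odd products reproduce $c_i\,\mathcal{C}\ell(V_i,f)_{\overline 1}$; hence they generate exactly $A_i$, and the universal property of Clifford algebras yields $A_i\cong\mathcal{C}\ell(V_i,f)$. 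For $i=1$ no twisting is needed, since $c_1\in\mathcal{C}\ell(V_1,f)$ forces $A_1=\mathcal{C}\ell(V_1,f)$ outright.

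Commutativity of distinct blocks reduces to commuting their generators. For $i<j$, take $w_k=c_iv_k$ with $k\le n_i$ and $w_l=c_jv_l$ with $l>n_{j-1}\ge n_i$; then $v_k$ anticommutes with $c_j$, $v_l$ commutes with $c_i$, $c_i$ commutes with $c_j$, and $v_kv_l=-v_lv_k$, and combining these four facts gives $w_kw_l=w_lw_k$, so $[A_i,A_j]=(0)$. The pairwise commutativity then provides a well-defined algebra homomorphism $\mu\colon\bigotimes_{i\in\mathbb{N}}A_i\to\mathcal{C}\ell(V,f)$ sending $a_1\otimes a_2\otimes\cdots$ to the product $a_1a_2\cdots$. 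To see that $\mu$ is an isomorphism I would truncate: let $B_N$ be the subalgebra generated by $A_1,\dots,A_N$. On one hand each $A_i$ with $i\le N$ lies in $\mathcal{C}\ell(\langle v_1,\dots,v_{n_N}\rangle,f)$, so $B_N$ does too; on the other hand $c_i=c_{i-1}\,u_i$ with $u_i=v_{n_{i-1}+1}\cdots v_{n_i}\in A_i$ shows by induction that $c_i\in B_N$, whence $v_k=\pm c_iw_k\in B_N$ for every $k\le n_N$. Therefore $B_N=\mathcal{C}\ell(\langle v_1,\dots,v_{n_N}\rangle,f)$, which has dimension $2^{n_N}=2^{d_1+\cdots+d_N}=\dim(A_1\otimes\cdots\otimes A_N)$; the surjection $\mu_N\colon A_1\otimes\cdots\otimes A_N\to B_N$ is thus an isomorphism, and passing to the direct limit over $N$ (the blocks exhaust $V$) gives $\mathcal{C}\ell(V,f)\cong\bigotimes_{i\in\mathbb{N}}A_i$.

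The main obstacle is the first assertion: spotting that the correct replacements are the twisted generators $w_k=c_iv_k$ rather than the $v_k$ themselves, and controlling the scalar $c_i^2=\pm1$. Once algebraic closedness is invoked to normalize this scalar to $1$, the commutation check and the dimension count are routine.
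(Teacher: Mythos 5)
Your proof is correct, and it rests on the same engine as the paper's --- the elements $c_i$, their anticommutation with $v_k$ for $k\le n_i$ and commutation for $k>n_i$ --- but the execution of all three assertions differs, and in two places your version is tighter. For the isomorphism $A_i\cong\mathcal{C}\ell(V_i,f)$, the paper writes down the explicit linear map $u\mapsto u$ on the even part and $u\mapsto c_iu$ on the odd part, asserting $c_i^2=-1$; in fact $c_i^2=(-1)^{n_i/2}$, so that map is multiplicative only when $n_i\equiv 2\pmod 4$, and your rescaling of the twisted generators $w_k=c_iv_k$ (the one place algebraic closedness is really used) is exactly what repairs this for arbitrary even $n_i$. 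One small point you should make explicit: the universal property by itself only gives a surjection $\mathcal{C}\ell(V_i,f)\to A_i$; injectivity follows either from simplicity of $\mathcal{C}\ell(V_i,f)$ (it is a matrix algebra, since $\dim V_i$ is even and $f$ is nondegenerate) or from your own observation that distinct products of the $w_k$ are nonzero scalar multiples of distinct basis monomials of $\mathcal{C}\ell(V,f)$, so the gap is cosmetic. For commutativity, the paper argues by cases --- the even part first, then reduces the odd--odd case to commutation of the auxiliary elements $c_i'=c_iv_{n_i}$ --- whereas your check on the generating set $\{w_k\}$ is shorter and more uniform, given that you have already shown the $w_k$ generate $A_i$. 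Finally, for the tensor decomposition the paper proves only that the $A_i$ pairwise commute and generate $\mathcal{C}\ell(V,f)$, leaving the identification with $\bigotimes_{i}A_i$ to the standard theory of commuting matrix subalgebras; your truncation argument, with the count $\dim B_N=2^{n_N}=\dim(A_1\otimes\cdots\otimes A_N)$ and passage to the direct limit, makes that step self-contained.
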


In this paper, we discuss derivations and automorphisms of the Clifford algebra of an infinite dimensional vector space $V$ endowed  with a nondegenerate quadratic form. There are two known families of examples: 

\begin{enumerate}
	\item[1)] inner derivations and inner automorphisms;
	\item[2)] Bogolyubov derivations and Bogolyubov automorphisms. 
\end{enumerate}

Let $\varphi$ be an invertible linear transformation  $\varphi: V \rightarrow V$  that preserves the quadratic form,  $f( \varphi(v))= f(v)$ for an arbitrary element $v \in V$. It is easy to see that  $\varphi$ uniquely extends to an automorphism of the algebra $\mathcal{C}\ell(V,f)$. Such automorphisms are called \emph{Bogolyubov automorphisms}.

Let $\psi: V \rightarrow V$ be a skew-symmetric linear transformation, that is, $$f(\psi(v), w) + f(v, \psi(w)) = 0 \quad \text{for all} \quad v, w \in V.$$ The mapping $\psi$ uniquely extends to a derivation of the algebra $\mathcal{C}\ell(V,f)$. These derivations are called \emph{Bogolyubov derivations}.

In Sec.~\ref{Sec1}, we use the techniques from \cite{14} to describe  derivations of $\mathcal{C}\ell(V,f)$.

Let $S = \{ i_1< \cdots < i_r \}$ be a finite set of positive integers. Denote $v_S = v_{i_1} \cdots v_{i_r}$.

A \emph{derivation} $D$ of the algebra $\mathcal{C}\ell(V,f)$ is called \emph{even} if $$D(\mathcal{C}\ell(V,f)_{\overline{0}}) \subseteq \mathcal{C}\ell(V,f)_{\overline{0}}, \quad D(\mathcal{C}\ell(V,f)_{\overline{1}}) \subseteq \mathcal{C}\ell(V,f)_{\overline{1}};$$ and a \emph{derivation} $D$ of the algebra $\mathcal{C}\ell(V,f)$ is called \emph{odd} if $$D(\mathcal{C}\ell(V,f)_{\overline{0}}) \subseteq \mathcal{C}\ell(V,f)_{\overline{1}}, \quad D(\mathcal{C}\ell(V,f)_{\overline{1}}) \subseteq \mathcal{C}\ell(V,f)_{\overline{0}}.$$

Note  that  Bogolyubov derivations of Clifford algebras are even.

\begin{theorem}\label{TH2} 	Let $V$ be a countable dimensional vector space  over an algebraically closed field $\mathbb{F}$, and let $v_i,$  $i\in \mathbb{N}$, be an arbitrary orthonormal basis of the space $V$.
\begin{enumerate}
\item[\emph{$(1)$}] Any nonzero even derivation $D$ of $\mathcal{C}\ell(V,f)$ can be uniquely represented as a sum 
\begin{equation}\label{sum1} D = \sum_{S} \alpha_S \, \textit{\emph{ad}}(v_S), \quad  \quad 0\neq\alpha_S \in \mathbb{F},
\end{equation} where the subsets $S$ are finite nonempty  subsets of  $\mathbb{N}$ of even order, and any $i \in \mathbb{N}$ lies in no more than finitely many subsets $S$.
			
\item[\emph{$(2)$}] Any nonzero odd derivation $D$ of $\mathcal{C}\ell(V,f)$ can be uniquely represented as a sum 	
\begin{equation}\label{sum2} D = \sum_{S} \alpha_S \, \textit{\emph{ad}}(v_S), \quad  \quad 0\neq\alpha_S \in \mathbb{F},
\end{equation}
where the subsets $S$ are finite subsets of  $\mathbb{N}$ of odd order, and any $i\in \mathbb{N}$ lies in all but finitely many subsets $S$.	
\end{enumerate}	
\end{theorem}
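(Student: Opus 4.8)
The plan is to reduce everything to a single commutation identity and then dispatch the two easy directions (well-definedness of the sums and their uniqueness) before attacking surjectivity. First I would establish, by induction from $v_iv_j=-v_jv_i$ ($i\neq j$) and $v_i^2=1$, that for finite $S,T\subseteq\mathbb{N}$ one has $v_Sv_T=(-1)^{|S|\,|T|+|S\cap T|}v_Tv_S$. Hence $\mathrm{ad}(v_S)(v_T)=v_Sv_T-v_Tv_S$ equals $0$ when $|S|\,|T|+|S\cap T|$ is even and equals $-2\,v_Tv_S=\pm 2\,v_{S\triangle T}$ otherwise. For $|S|$ even this nonvanishing criterion reads ``$|S\cap T|$ is odd'' (so $S\cap T\neq\emptyset$), while for $|S|$ odd it reads ``$|T\setminus S|$ is odd'' (so $T\not\subseteq S$). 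Since $v_S$ lies in $\mathcal{C}\ell(V,f)_{\overline{0}}$ or $\mathcal{C}\ell(V,f)_{\overline{1}}$ according to the parity of $|S|$, the derivation $\mathrm{ad}(v_S)$ is even for even $S$ and odd for odd $S$, which is why the two cases of the theorem use subsets of even and odd order respectively.

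Well-definedness and uniqueness. For a fixed finite $T$ the element $D(v_T)=\sum_S\alpha_S\,\mathrm{ad}(v_S)(v_T)$ is a genuine (finite) element of the algebra exactly under the stated hypotheses: in the even case the contributing $S$ meet $T$, hence lie in the finite set $\bigcup_{i\in T}\{S:i\in S\}$; in the odd case the contributing $S$ fail to contain $T$, and only finitely many $S$ omit a given finite $T$ when each $i$ lies in all but finitely many $S$. A routine Leibniz check then confirms each expression is a derivation of the asserted parity. For uniqueness I would evaluate a vanishing sum at an arbitrary $v_T$: the nonzero terms are scalar multiples of the distinct basis vectors $v_{S\triangle T}$, so $\alpha_S=0$ for every $S$ anticommuting with $v_T$; letting $T$ run through singletons (choosing $i\in S$ if $|S|$ is even and $i\notin S$ if $|S|$ is odd) kills every coefficient.

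Existence. Here I would use that $D$ is parity-homogeneous. In the even case $v_i\in\mathcal{C}\ell(V,f)_{\overline1}$ gives $D(v_i)\in\mathcal{C}\ell(V,f)_{\overline1}$, and $D(v_i^2)=0$ forces $\{v_i,D(v_i)\}=0$; by the commutation identity this means that in $D(v_i)=\sum_U\beta^{(i)}_U v_U$ only sets $U$ with $|U|$ odd and $i\notin U$ occur. For each even nonempty $S$ and each $i\in S$ the rule $\alpha_S=\pm\frac{1}{2}\beta^{(i)}_{S\setminus\{i\}}$ is then a candidate for the coefficient, and since $D(v_i)$ is a finite sum only finitely many $S$ containing a given $i$ can have $\alpha_S\neq 0$; this is precisely the finiteness condition in part~(1). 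The odd case is identical after interchanging the roles of ``$i\in S$'' and ``$i\notin S$'': now $D(v_i)$ involves only $U$ with $|U|$ even and $i\in U$, one sets $S=U\setminus\{i\}$, and finiteness of $D(v_i)$ forces each $i$ to be omitted by only finitely many $S$, i.e.\ the cofinite condition in part~(2).

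The main obstacle is to show that $\alpha_S$ does not depend on the choice of $i\in S$ (resp.\ $i\notin S$), for only then is $D':=\sum_S\alpha_S\,\mathrm{ad}(v_S)$ well defined and equal to $D$ (two derivations agreeing on the generators $v_j$ coincide). I would derive this consistency by applying $D$ to the relation $v_iv_j+v_jv_i=0$ with $i\neq j$, which yields $\{v_i,D(v_j)\}+\{D(v_i),v_j\}=0$; expanding both anticommutators with the commutation identity and comparing the coefficient of each basis vector $v_P$ produces exactly the required identity $\beta^{(i)}_{P\cup\{j\}}=\pm\beta^{(j)}_{P\cup\{i\}}$ relating the coefficients extracted from $v_i$ and from $v_j$. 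The only delicate point is the bookkeeping of signs coming from reordering products, and I would organize it by always reducing $v_Uv_i$ to $\pm v_{U\triangle\{i\}}$ through the identity of the first paragraph. As an independent check that such a consistent choice must exist, one can note that each $B_n=\mathcal{C}\ell(V_n,f)\cong M_{2^{n/2}}(\mathbb{F})$ is separable, so $D|_{B_n}$ is inner, $D|_{B_n}=\mathrm{ad}(x_n)$; since $x_{n+2}-x_n$ lies in the centralizer $C_A(B_n)$ whose basis is described by the construction preceding Theorem~\ref{TH_1}, the coefficient of each $v_S$ with $S\subseteq\{1,\dots,n\}$ stabilizes as $n\to\infty$, and the stabilized values are the $\alpha_S$.
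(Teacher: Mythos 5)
Your proof is correct, but it follows a genuinely different route from the paper's. The paper deduces Theorem~\ref{TH2} from the general structure theory of derivations of infinite tensor products of matrix algebras: it invokes Theorem~2 of \cite{14} (every derivation of $\bigotimes_i A_i$ is $\sum_{S\in\wp}\mathrm{ad}(a_S)$ over a sparse system $\wp$), applied through the tensor decomposition of Theorem~\ref{TH_1}, and then rewrites each $a_S$ in the basis of ordered products $v_{j_1}\cdots v_{j_l}$, the parity of $D$ dictating which products occur; uniqueness is proved exactly as you propose, by evaluating a vanishing sum on the generators $v_k$. You instead prove existence from scratch: the commutation identity $v_Sv_T=(-1)^{|S||T|+|S\cap T|}v_Tv_S$, extraction of candidate coefficients from $D(v_i)$ using $\{v_i,D(v_i)\}=0$, and consistency of these coefficients obtained by applying $D$ to $v_iv_j+v_jv_i=0$. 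This does work: the sign bookkeeping closes (in the even case the required identity reduces to the parity of $|S\setminus\{i,j\}|$, which is even; in the odd case it holds using that $|S\cup\{i,j\}|$ is odd), and once the $\alpha_S$ are well defined, $D'=\sum_S\alpha_S\,\mathrm{ad}(v_S)$ agrees with $D$ on all generators, hence everywhere. What your approach buys: a self-contained, elementary argument that depends neither on \cite{14} nor on the tensor factorization of Theorem~\ref{TH_1}, together with explicit formulas $\alpha_S=\pm\tfrac12\beta^{(i)}_{S\setminus\{i\}}$ reading the coefficients off from $D$ on generators — in effect you reprove, for Clifford algebras, the special case of the sparse-system theorem of \cite{14} that the paper cites. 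What the paper's approach buys: brevity, and the finiteness (``sparsity'') conditions in (\ref{sum1}) and (\ref{sum2}) come for free from the general theorem instead of being extracted from the finiteness of each $D(v_i)$. Your closing remark — innerness of $D$ restricted to finite matrix subalgebras plus stabilization of coefficients on centralizers — is essentially the mechanism behind the proof in \cite{14}, so it is a fitting sanity check rather than an independent argument.
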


Notice that a derivation is inner if and only if the sum (\ref{sum1}) or (\ref{sum2}) is finite.

\begin{remark} From the paper \cite{14}, it follows that the dimension of  the Lie algebra $\emph{\text{Der}}(\mathcal{C}\ell(V,f))$, which consists of  all derivations of the Clifford algebra $\mathcal{C}\ell(V,f)$, is    $|\mathbb{F}|^{\dim _{\mathbb{F}}\,V}.$ 
\end{remark}

\begin{theorem}\label{TH3} Let $V$ be a countable dimensional vector space  over an algebraically closed field $\mathbb{F}$, and let $v_i,$ $i \in \mathbb{N},$ be an arbitrary orthonormal basis of the space $V$. A nonzero  even derivation $D $ of $ \mathcal{C}\ell(V,f) $ is a Bogolyubov derivation if and only if 
	\begin{equation}\label{EQ_3} D = \sum_{i<j} \alpha_{ij}\, \textit{\emph{ad}}(v_i v_j), \quad \alpha_{ij}\in \mathbb{F}, 	\end{equation}  where  for each $ i \in \mathbb{N} $ only finitely many coefficients $ \alpha_{ij},$ $i<j,$ are nonzero. Moreover, this  presentation~\emph{(\ref{EQ_3})}  is unique. \end{theorem}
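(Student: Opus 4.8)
The plan is to give an intrinsic characterization of Bogolyubov derivations among all even derivations, and then to read off the shape~(\ref{EQ_3}) from the classification in Theorem~\ref{TH2}. The guiding observation is that a derivation of $\mathcal{C}\ell(V,f)$ is a Bogolyubov derivation precisely when it preserves the generating subspace $V$.

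First I would prove the equivalence: a derivation $D$ of $\mathcal{C}\ell(V,f)$ is a Bogolyubov derivation if and only if $D(V)\subseteq V$. One implication is immediate from the definition, since a Bogolyubov derivation is by construction the extension of a linear map $\psi\colon V\to V$. For the converse, assume $D(V)\subseteq V$. From $D(1)=0$ and $v^2=f(v)\cdot 1$ we obtain $D(v)\,v+v\,D(v)=0$; writing $w=D(v)\in V$ and using the identity $vw+wv=f(v,w)\cdot 1$ valid for $v,w\in V$, this reads $f(D(v),v)=0$ for every $v\in V$. Polarizing the identity $f(D(v),v)=0$ yields $f(D(v),u)+f(v,D(u))=0$, so $\psi:=D|_V$ is skew-symmetric. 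Since $D$ and the Bogolyubov extension of $\psi$ are derivations that agree on the generating set $V$, they coincide, and $D$ is a Bogolyubov derivation.

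Next I would combine this characterization with Theorem~\ref{TH2}. Writing a nonzero even $D$ as $D=\sum_S\alpha_S\,\mathrm{ad}(v_S)$ with $|S|$ even, I compute $\mathrm{ad}(v_S)(v_k)$ by anticommutation. Because $|S|$ is even, $[v_S,v_k]=0$ whenever $k\notin S$, whereas if $k$ is the $m$-th element of $S$ then $[v_S,v_k]=2(-1)^m\,v_{S\setminus\{k\}}$, a basis monomial of degree $|S|-1$. Distinct sets $S\ni k$ yield distinct monomials $v_{S\setminus\{k\}}$, so no cancellation occurs in $D(v_k)=\sum_{S\ni k}\alpha_S\,[v_S,v_k]$. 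Hence $D(V)\subseteq V$ forces $|S|-1=1$, i.e. $|S|=2$, for every $S\ni k$ with $\alpha_S\neq 0$; letting $k$ range over $\mathbb{N}$ shows that every $S$ occurring in $D$ has order $2$. Thus $D=\sum_{i<j}\alpha_{ij}\,\mathrm{ad}(v_iv_j)$, and conversely any such sum preserves $V$ by the same computation, hence is Bogolyubov.

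It remains to record the support condition and uniqueness. For sets $S=\{i,j\}$ of order $2$, the condition of Theorem~\ref{TH2} that each index lie in only finitely many $S$ splits, for a fixed index $m$, into finiteness of $\{\,i<m:\alpha_{im}\neq 0\,\}$ --- which is automatic, as there are only finitely many $i<m$ --- and finiteness of $\{\,j>m:\alpha_{mj}\neq 0\,\}$; the latter is exactly the requirement stated in~(\ref{EQ_3}). Uniqueness of~(\ref{EQ_3}) is inherited verbatim from the uniqueness of the representation~(\ref{sum1}) in Theorem~\ref{TH2}. I expect the main obstacle to be the degree bookkeeping of the third step: one must verify that $[v_S,v_k]$ lowers the degree by exactly one with a nonzero coefficient and that the contributions of different $S$ are linearly independent, since it is precisely this non-cancellation that pins the order of every $S$ down to $2$.
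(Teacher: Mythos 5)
Your proof is correct and takes essentially the same approach as the paper: represent the even derivation via Theorem~\ref{TH2}, compute $D(v_k)$ by anticommutation, and conclude from $D(v_k)\in V$ that every set $S$ occurring with $\alpha_S\neq 0$ has order $2$, with the converse following because a sum $\sum_{i<j}\alpha_{ij}\,\mathrm{ad}(v_iv_j)$ preserves $V$. The only difference is that you spell out the lemma that a derivation is Bogolyubov if and only if $D(V)\subseteq V$ (deriving skew-symmetry of $D|_V$ by polarization), a step the paper uses implicitly when it asserts ``$D$ maps $V$ to $V$, and therefore $D$ is a Bogolyubov derivation.''
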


For a finite dimensional vector space $ V $, Theorem \ref{TH3} is discussed in \cite{jacobson}.

\begin{corollary}\label{Cor1}  The Bogolyubov derivation $ D $ corresponding to a skew-symmetric linear transformation $ \psi $ of a countable dimensional vector space $V$ over an algebraically closed field $\mathbb{F}$ is inner if and only if $ \psi$  is finitary, that is, $\dim_\mathbb{F} \psi(V) < \infty.$ \end{corollary}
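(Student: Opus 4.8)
The plan is to combine Theorem~\ref{TH3} with the innerness criterion stated after Theorem~\ref{TH2}, reducing the statement to a linear-algebra fact about $\psi$. Since every Bogolyubov derivation is even, Theorem~\ref{TH3} gives a unique expression
\[
D = \sum_{i<j} \alpha_{ij}\,\mathrm{ad}(v_iv_j), \qquad \alpha_{ij}\in\mathbb{F},
\]
in which, for each fixed $i$, only finitely many $\alpha_{ij}$ are nonzero. This is a representation of the form~(\ref{sum1}) with all subsets $S$ of order two, so by the uniqueness in Theorem~\ref{TH2} it coincides with the canonical representation~(\ref{sum1}) of $D$. Hence the criterion after Theorem~\ref{TH2} applies verbatim: $D$ is inner if and only if the sum is finite, that is, if and only if only finitely many coefficients $\alpha_{ij}$ are nonzero. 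It therefore suffices to prove that only finitely many $\alpha_{ij}$ are nonzero precisely when $\dim_{\mathbb{F}}\psi(V)<\infty$.

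To connect the coefficients to $\psi$, I would first compute the action of each $\mathrm{ad}(v_iv_j)$ on the basis. Using $v_av_b=-v_bv_a$ for $a\neq b$ together with $v_a^2=1$, one checks that $\mathrm{ad}(v_iv_j)$ annihilates every $v_k$ with $k\notin\{i,j\}$ and maps the plane $\mathbb{F}v_i+\mathbb{F}v_j$ into itself, sending $v_i\mapsto -2v_j$ and $v_j\mapsto 2v_i$. Since $D$ restricts to $\psi$ on $V$, this yields the explicit formula
\[
\psi(v_k)=D(v_k)=2\sum_{i<k}\alpha_{ik}\,v_i-2\sum_{j>k}\alpha_{kj}\,v_j .
\]
The forward implication is now immediate: if only finitely many $\alpha_{ij}$ are nonzero, then $\psi(v_k)=0$ for all but finitely many $k$ and each nonzero $\psi(v_k)$ lies in a fixed finite dimensional subspace, so $\psi(V)$ is finite dimensional.

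The substantive direction is the converse, which I expect to be the main obstacle, since a finite-rank operator can a priori have infinitely many nonzero matrix entries; the point is that skew-symmetry together with orthonormality forces the support to be finite. Assume $\dim_{\mathbb{F}}\psi(V)<\infty$. Because $V$ consists of finite linear combinations of the $v_i$, the finitely many vectors spanning $\psi(V)$ involve only basis vectors indexed by some \emph{finite} set $T\subseteq\mathbb{N}$; thus $\psi(V)\subseteq\sum_{t\in T}\mathbb{F}v_t$. As $f$ is nondegenerate and $\psi$ is skew-symmetric, a short computation gives $\ker\psi=\psi(V)^{\perp}$; and since the basis is orthonormal, every $v_k$ with $k\notin T$ is orthogonal to $\sum_{t\in T}\mathbb{F}v_t\supseteq\psi(V)$, whence $v_k\in\ker\psi$. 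Consequently $\psi(v_k)=0$ whenever $k\notin T$, and by skew-symmetry the coefficient $m_{mk}$ of $v_m$ in $\psi(v_k)$ vanishes unless both $m,k\in T$. Reading off the formula above, this forces $\alpha_{ij}=0$ unless $i,j\in T$, so only finitely many $\alpha_{ij}$ are nonzero. Together with the reduction of the first paragraph, this shows that $D$ is inner if and only if $\psi$ is finitary.
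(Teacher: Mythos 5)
Your proposal is correct and takes essentially the same route as the paper's proof: both combine Theorems~\ref{TH2} and~\ref{TH3} with the innerness criterion stated after Theorem~\ref{TH2}, and then translate finiteness of the set of nonzero coefficients $\alpha_{ij}$ into finite-dimensionality of $\psi(V)$. You merely fill in details the paper leaves implicit --- the explicit action of $\mathrm{ad}(v_iv_j)$ on the orthonormal basis and the $\ker\psi=\psi(V)^{\perp}$ argument in the converse direction, where the paper simply asserts that $\psi(V)\subseteq\mathrm{Span}(v_1,\ldots,v_n)$ forces $D=\sum_{1\leq i<j\leq n}\alpha_{ij}\,\mathrm{ad}(v_iv_j)$.
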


P. de la Harpe \cite{harpe}  proved that a Bogolyubov automorphism  corresponding to an orthogonal linear transformation $ \varphi $  of $ V $  is inner if and only if the transformation $ \varphi $  is finitary, that is, $\text{Id}-\varphi $ has finite dimensional image; here $\text{Id}$ is the identity transformation.

Suppose that $\mathbb{F} = \mathbb{R}$ is the field of real numbers, and let $f: V \to \mathbb{R}$ be a positive definite  quadratic form. Then the Clifford algebra $\mathcal{C}\ell(V,f) $ possesses  a natural structure of a normed algebra; see Sec.~\ref{Sec2}. M.~Ludewig \cite{Ludewig} raised the question of whether every automorphism of $\mathcal{C}\ell(V,f)$ is continuous with respect to this norm.

In Sec.~\ref{Sec2}, we construct an algebraic  automorphism of $\mathcal{C}\ell(V,f)$ that is not continuous in this norm.

For more information about Clifford algebras of infinite dimensional vector spaces, see~\cite{harpe,shale_stinespring,Wene}.

\section{Tensor decompositions}\label{Sec1_1}

\begin{proof}[Proof of Theorem \ref{TH_1}] Since the elements $n_i$ are even, it follows that $c_i^2 = -1$, the element $c_i$ commutes with all elements from $\mathcal{C}\ell(V_i,f)_{\overline{0}}$ and anticommutes with all elements from $\mathcal{C}\ell(V_i,f)_{\overline{1}}$. The mapping $\varphi_i: \mathcal{C}\ell(V_i,f)  \mapsto A_i;$ defined by $$\varphi_i(u)=u, \ u\in \mathcal{C}\ell(V_i,f)_{\overline{0}}; \quad \varphi_i(u)=c_i u, \ u\in \mathcal{C}\ell(V_i,f)_{\overline{1}},$$  is an isomorphism.
Let us  show that arbitrary elements $a \in A_i$ and  $b \in A_j$, where $i \neq j$, commute. If $a \in \mathcal{C}\ell(V_i,f)_{\overline{0}}$, then $a$ commutes with all elements $v_{k}$, where  $n_{j-1} +1 \leq k \leq  n_j$, and $a$ commutes with $c_j$. Thus, $[a, A_j] = 0$.

Denote  $c_i'=c_i v_{n_i}=v_1 \cdots v_{n_i -1}$. Then $$c_i \, \mathcal{C}\ell(V_i,f)_{\overline{1}} = c_i' \, \mathcal{C}\ell(V_i,f)_{\overline{0}}.$$
It remains to show that the element $c_i'$ commutes with the element $c_j'$. We have $c_j' = c_i'\, v_{n_i} v_{n_i + 1} \cdots v_{n_j -1}$. The element $v_{n_i} v_{n_i+1} \cdots v_{n_j -1}$ has an even length and commutes with elements  $v_{1},$ $\ldots,$ $ v_{n_i-1}$. Hence, it commutes with $c_i'$, completing the proof that the elements $c_i'$ and  $c_j'$ commute, and therefore, $[A_i, A_j] = (0)$.

Finally, we show that $\sum_{i\in \mathbb{N}} A_i$ generates the algebra $\mathcal{C}\ell(V,f)$. Let $k \in \mathbb{N}$, $n_{i-1} < k \leq n_{i}$. If $i=1,$  then the element $v_k$ lies in $A_1 = \mathcal{C}\ell(V_1)$. If $i\geq 2$, using induction on $i$, we assume that $$v_1, \  \ldots, \ v_{n_{i-1}} \in A_1 A_2 \cdots A_{i-1}.$$ We have $$c_i = c_{i-1} \, v_{n_{i-1}+1} \cdots v_{n_i}, \quad v_{n_{i-1}+1} \cdots v_{n_i}= \pm v_k \, u,$$ where $u = v_{n_{i-1}+1} \cdots \widehat{v_k} \cdots  v_{n_i}$ and $u$ lies in $\mathcal{C}\ell(V_i,f)_{\overline{1}}.$ Hence, $$v_k \in v_{n_{i-1}} \cdots v_{n_i}\,  \mathcal{C}\ell(V_i,f)_{\overline{1}} \subseteq  c_{i-1} c_i \, \mathcal{C}\ell(V_i,f)_{\overline{1}} \subseteq A_1 \cdots A_i$$ by the induction assumption. This completes the proof of the theorem. \end{proof}

\section{Derivations}\label{Sec1}

Let us review the description of derivations of infinite tensor products of matrix algebras from \cite{14}.

Let $$A = \bigotimes_{i = 1}^{\infty} A_i, \quad A_i \cong M_{n_i}(\mathbb{F}).$$ A system $\wp$ of nonempty finite subsets of $\mathbb{N}$ is said to be \emph{sparse} if
\begin{enumerate}
	\item[(1)] for any $S \in \wp$ all nonempty subsets of $S$ also lie in $\wp$,
	\item[(2)] each  $i \in \mathbb{N}$  is included in no more than finitely many subsets from $\wp$.
\end{enumerate}
For a subset $S = \{i_1, \ldots, i_r\}$ of  $\mathbb{N}$, denote $$A_S = A_{i_1} \bigotimes \cdots \bigotimes A_{i_r} \cong M_{n_{i_1} \cdots n_{i_r}}(\mathbb{F}).$$ Let $\wp$ be a sparse system. For each subset $S \in \wp$, choose an element $a_S \in A_S$. The sum
\begin{equation}\label{EQ_1}
 \sum_{S \in \wp} \text{ad}(a_S)	
\end{equation}
converges in the Tykhonoff topology to a derivation of $A$. Indeed, choose an arbitrary element $a \in A$. Let $$a \in A_{i_1} \bigotimes \cdots \bigotimes A_{i_r}.$$ Due to the sparsity of the system $\wp$, for all but finitely many subsets $S \in \wp$, we have $\{i_1, \ldots, i_r\} \cap S = \emptyset$, and $\text{ad}(a_S) a = 0$. Let $D_{\wp}$ be the vector space of all such sums (\ref{EQ_1}), where  $D_{\wp} \subseteq \text{Der}(A)$. In \cite{14}, we proved that \begin{equation}\label{EQ_2}
	\text{Der}(A) = \bigcup_{\wp} D_{\wp}, \end{equation}
where the union is taken over all sparse systems of subsets of $\mathbb{N}$.

\begin{proof}[Proof of Theorem \ref{TH2}] Let \(D\) be a derivation of \(\mathcal{C}\ell(V,f)\). By Theorem~2 of \cite{14}, there exists a sparse system of finite subsets \(\wp = \{S \subset \mathbb{N}\}\) and elements \(a_S \in A_S\) such that \(D = \sum_{S \in \wp}  \text{ad}(a_S)\).
	
For an arbitrary set \(S = \{i_1 < \cdots < i_r\} \in \wp\), define $$ \min S   = n_{i_1 -1} + 1, \quad \max S = n_{i_r}.$$ Let \(D\) be an even derivation. Then \(a_S \in (A_S)_{\overline{0}}\) for all \(S \in \wp\). The subalgebra \((A_S)_{\overline{0}}\) is contained in the span of products \(v_{j_1} \cdots v_{j_l}\), where  \( \min S \leq j_1 < \cdots < j_l \leq \max S \) and \(l\) is an even number. Consequently, $$a_S = \sum_{S, \ \min S \leq j_1 < \cdots < j_l \leq \max S, \ l \text{ is even}} \alpha_{j_1  \cdots  j_l} \ v_{j_1} \cdots v_{j_l}, \quad  \alpha_{j_1  \cdots  j_l} \in \mathbb{F}.$$
This formulation yields a presentation $$D = \sum_{S, \ \min S \leq j_1 < \cdots < j_l \leq \max S, \ l \text{ is even}} \alpha_{j_1  \cdots  j_l} \ \text{ad}(v_{j_1} \cdots v_{j_l}), \quad  0\not= \alpha_{j_1  \cdots  j_l} \in \mathbb{F}. $$
For any \(i \in \mathbb{N}\), the number \(i\) is not contained in \(\{j_1, \ldots , j_l\}\) as soon as \(i< \min S\). Hence, \(v_i\) is included in finitely many products \(v_{j_1} \cdots v_{j_l}\).

If  \(D\) is an odd derivation, then \(a_S \in (A_S)_{\overline{1}}\). The space \((A_S)_{\overline{1}}\) is contained in $$c_{j_1} \cdot \text{Span} \Big(v_{j_1} \cdots v_{j_l}, \quad  l \text{ is odd}, \quad \min S \leq j_1 < \cdots < j_l \leq \max S \Big).$$
This results in the presentation $$D = \sum_{S, \ \min S \leq j_1 < \cdots < j_l \leq \max S, \ l \text{ is odd}} \alpha_{j_1  \cdots  j_l} \ \text{ad}(c_{i_1}\, v_{j_1} \cdots v_{j_l}), \quad  0\not= \alpha_{j_1  \cdots  j_l} \in \mathbb{F}. $$ A generator \(v_i\) is involved in \(c_{i_1} v_{j_1} \cdots v_{j_l}\) as soon as \(i < \max S\), hence \(v_i\) is involved in all but finitely many  summands.

Now, let's prove uniqueness. To demonstrate this, let us show that
$$\sum_{S} \alpha_S \, \text{ad}(v_S) = 0 $$  implies $ \alpha_S = 0 $ for all $S$.

For an even derivation, let $ k \in \mathbb{N}$, and let $ S_1,$ $\ldots,$ $S_r $ be all finite  subsets of even orders from the sum that contain $ k. $ Then $v_k v_{S_i}=-v_{S_i} v_k,$ leading to $$ \left( \sum_{S} \alpha_{S}\, \text{ad}(v_{S})\right) \big(v_k\big) = \left(\sum_{i=1}^r \alpha_{S_i} \, \text{ad}(v_{S_i})\right) \big(v_k\big) =2 v_k \sum_{i=1}^r \alpha_{S_i}\, v_{S_i} = 0, $$ and therefore,  $\alpha_{S_1} = \cdots = \alpha_{S_r} =0 $.

For an odd derivation, let \(S_1, \ldots, S_r\) be all finite subsets of odd orders from the sum that do not contain \(k\). Following a similar argument as for even derivations, \(v_k v_{S_i} = -v_{S_i}  v_k, 1 \leq i \leq r\), and if \(k \in  S\), then \(v_k v_S = v_S v_k\), which results in 
\[ \left( \sum_{S} \alpha_S \ \text{ad}(v_S)\right) \big(v_k\big) =  \left( \sum_{i=1}^{r} \alpha_{S_i}\  \mathrm{ad}(v_{S_i})\right) \big(v_k\big) = 2\, v_k \ \sum_{i=1}^{r} \alpha_{S_i} v_{S_i}  = 0 ,\]
thus, \(\alpha_{S_1} =\cdots =\alpha_{S_r} = 0\).

This completes the proof of the theorem. \end{proof}

\begin{proof}[Proof of Theorem \ref{TH3}] Let $ D = \sum_{S} \alpha_S \, \text{ad}(v_S)$  be a Bogolyubov derivation. Choose $k \in \mathbb{N}.$ As mentioned previously, let $S_1,$ $ \ldots,$ $S_r$  be all finite  subsets of even orders   that contain $ k.$ Let $ S'_i = S_i \backslash \{k\}$ for $ 1 \leq i \leq r.$  Then $v_k v_{S_i} = \pm v_{S_i'}.$ We have $$  \left( \sum_{S} \alpha_S \, \text{ad}(v_S) \right)  \big(v_k\big) =  \left( \sum_{i=1}^r \alpha_{S_i} \, \text{ad}(v_{S_i}) \right) \big(v_k\big) = 2 v_k \sum_{i=1}^r \alpha_{S_i}\, v_{S_i}=  2 \sum_{i=1}^r \pm\alpha_{S_i}\, v_{S'_i} \in V.$$  This implies $ |S'_i| = 1$  for $ 1 \leq i \leq r, $  and $ |S_i| = 2. $ 
	
On the other hand, if an even derivation $ D $ has a required presentation $$ D = \sum_{i<j} \alpha_{ij} \, \text{ad}(v_i v_j), $$ then $ D$  maps $ V $ to $ V, $  and therefore, $ D$   is a Bogolyubov derivation. This completes the proof of the theorem. \end{proof}

\begin{proof}[Proof Corollary \ref{Cor1}] Theorems \ref{TH2}, \ref{TH3} imply that if a Bogolyubov derivation $ D $ is inner, then the sum $$  \sum_{i<j} \alpha_{ij} \, \text{ad}(v_i v_j)$$ is finite, and  $$ D = \sum_{1 \leq i<j \leq n} \alpha_{ij} \, \text{ad}(v_i v_j) \quad \text{ for some } \quad n\geq 2.$$ Consequently, $\psi(V) \subseteq \mathrm{Span}(v_1, \ldots, v_n). $ 
	
On the other hand, if $ \dim_\mathbb{F} \psi(V)  < \infty ,$ then $\psi(V) \subseteq \mathrm{Span}(v_1, \ldots, v_n) $ for some $ n;$ and $ D = \sum_{1 \leq i<j \leq n} \alpha_{ij}\, \mathrm{ad}(v_i v_j). $ This completes the proof of the corollary. \end{proof}

\section{Automorphisms of Clifford algebras}\label{Sec2} 

We will start with a trace construction that applies to all unital locally matrix algebras. Let \( A \) be a unital locally matrix algebra over a field \( \mathbb{F} \). Define: \[ \mathcal{D}(A) = \{ n \in \mathbb{N} \ | \text{ there exists a subalgebra }   B \leq A, \text{ with } 1\in B \text{ and }   B \cong M_n(\mathbb{F}) \}. \]

\begin{lemma}\label{lem1}  Suppose that the characteristic of the ground field \( \mathbb{F} \) is coprime with all integers from \( \mathcal{D}(A) \). Then there exists a unique linear functional \( \emph{\text{tr}} \colon A \to \mathbb{F} \) such that \( \emph{\text{tr}}(ab) = \emph{\text{tr}}(ba) \) for arbitrary elements \( a, b \in A \), and \( \emph{\text{tr}}(1) = 1 \). For an arbitrary matrix subalgebra \(  B \leq A \) where $1 \in B$ and \( B \cong M_n(\mathbb{F}) \), the restriction of the functional \( \emph{\text{tr}} \) to \( B \) coincides with the normalized matrix trace.\end{lemma}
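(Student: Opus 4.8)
The plan is to construct the trace functional explicitly by gluing together normalized matrix traces on an exhausting chain of matrix subalgebras, and then show that any two such traces agree on overlaps so that the global functional is well defined. First I would exploit the fact that $A$ is a countable union (or, in the general locally matrix case, a directed union) of unital matrix subalgebras $B \cong M_n(\mathbb{F})$ with $1 \in B$. On each such $B$ the normalized matrix trace $\mathrm{tr}_B = \tfrac{1}{n}\,\mathrm{Tr}$ is the \emph{unique} linear functional that is symmetric ($\mathrm{tr}_B(ab)=\mathrm{tr}_B(ba)$) and sends the identity to $1$; the coprimality hypothesis on $\mathrm{char}\,\mathbb{F}$ is exactly what guarantees that $n$ is invertible in $\mathbb{F}$, so $\tfrac{1}{n}$ makes sense and $\mathrm{tr}_B(1)=1$. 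I would define $\mathrm{tr}(a) := \mathrm{tr}_B(a)$ for any $B$ containing $a$ with $1\in B$ and $B\cong M_n(\mathbb{F})$.

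The heart of the argument is well-definedness: if $a$ lies in two unital matrix subalgebras $B_1\cong M_{n_1}(\mathbb{F})$ and $B_2\cong M_{n_2}(\mathbb{F})$, I must show $\mathrm{tr}_{B_1}(a)=\mathrm{tr}_{B_2}(a)$. By the locally matrix property, $B_1$ and $B_2$ both sit inside a common unital matrix subalgebra $C\cong M_m(\mathbb{F})$ (apply the definition to a finite generating set of $B_1\cup B_2$, and absorb $1$). So it suffices to compare, for a single unital inclusion $B\hookrightarrow C$ of matrix algebras with $B\cong M_n(\mathbb{F})$ and $C\cong M_m(\mathbb{F})$, the restriction of $\mathrm{tr}_C$ to $B$ against $\mathrm{tr}_B$. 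The key classical fact is that a unital embedding of $M_n(\mathbb{F})$ into $M_m(\mathbb{F})$ forces $n \mid m$, and the commutant of $B$ in $C$ is isomorphic to $M_{m/n}(\mathbb{F})$, realizing $C \cong B \otimes M_{m/n}(\mathbb{F})$ as $B$-bimodules. Under this identification $\mathrm{tr}_C = \mathrm{tr}_B \otimes \mathrm{tr}_{M_{m/n}}$, so $\mathrm{tr}_C|_B = \mathrm{tr}_B$; alternatively, and more cleanly, I would note that $\mathrm{tr}_C|_B$ is itself a symmetric linear functional on $B$ with $\mathrm{tr}_C(1)=1$, hence equals $\mathrm{tr}_B$ by the uniqueness of the normalized trace on $M_n(\mathbb{F})$. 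This uniqueness-on-each-block argument is the cleanest route and sidesteps any explicit index computation.

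Granting well-definedness, linearity of $\mathrm{tr}$ follows because any two elements $a,b$ lie together in a single unital matrix subalgebra $B$ (again by the locally matrix property applied to $\{a,b\}$), on which $\mathrm{tr}$ agrees with the linear functional $\mathrm{tr}_B$; likewise the symmetry $\mathrm{tr}(ab)=\mathrm{tr}(ba)$ and the normalization $\mathrm{tr}(1)=1$ are inherited from $\mathrm{tr}_B$, and the final assertion that $\mathrm{tr}|_B$ is the normalized matrix trace is immediate from the construction. For global uniqueness of $\mathrm{tr}$ on all of $A$: if $\tau$ is any symmetric linear functional with $\tau(1)=1$, then its restriction to each unital $B\cong M_n(\mathbb{F})$ is symmetric with $\tau(1)=1$, hence coincides with $\mathrm{tr}_B$ by the matrix-algebra uniqueness; since the $B$'s exhaust $A$, we get $\tau=\mathrm{tr}$.

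I expect the main obstacle to be the well-definedness step, specifically verifying that the normalized trace is genuinely the \emph{unique} symmetric unital functional on $M_n(\mathbb{F})$ and that any two unital matrix subalgebras embed into a common one. The first point rests on the standard commutator computation $[E_{ij},E_{jk}]=E_{ik}$ (for $i\neq k$) together with $[E_{ij},E_{ji}]=E_{ii}-E_{jj}$, which forces a symmetric functional to be constant on the diagonal idempotents and to annihilate off-diagonal units, pinning it down up to the value at $1$; here the coprimality hypothesis is what lets us solve for that constant value as $1/n$. The second point is a direct consequence of the locally matrix definition, applied to a finite set generating $B_1 \cup B_2$, with care taken that the ambient subalgebra contains $1$ so that all traces are normalized consistently.
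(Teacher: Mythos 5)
Your proposal is correct and takes essentially the same route as the paper: glue the normalized matrix traces over unital matrix subalgebras containing $1$, check consistency by embedding into a common larger matrix subalgebra, and derive uniqueness from the uniqueness of the symmetric unital functional on $M_n(\mathbb{F})$ (where coprimality makes $n$ invertible). The only difference is local: where the paper verifies consistency for a nested pair $1 \in B \le C$ via the explicit diagonal-embedding computation $\mathrm{tr}_C|_B = (m/n)\,\mathrm{tr}_B$, your preferred argument gets the same conclusion by noting that $\mathrm{tr}_C|_B$ is a symmetric linear functional on $B$ with value $1$ at the identity --- a slightly cleaner step that reuses the very uniqueness fact the paper invokes anyway at the end.
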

\begin{proof} Let \( 1 \in B \leq C \leq A \) be matrix subalgebras of \( A \), where  \( B \cong M_n(\mathbb{F}) \) and \( C \cong M_m(\mathbb{F}) \). Then \( k = m/ n \) is an integer and the embedding \( B \hookrightarrow C \) is diagonal
$$ a \rightarrow \text{diag}(\underbrace{a, a, \dots, a}_k ).$$ This implies that \( \text{tr}_C(a) = k \cdot \text{tr}_B(a) \), and therefore, $$ \frac{1}{n} \  \text{tr}_B(a) = \frac{1}{m} \ \text{tr}_C(a) .$$ 

Now, for any element \( a \in A \), choose a subalgebra \( B \leq A \) such that \( 1, a \in B \) and \( B \cong M_n(\mathbb{F}) \). Define: $$ \text{tr}(a) = \frac{1}{n} \ \text{tr}_B(a) .$$ In view of the above, \( \text{tr}(a) \) does not depend on the choice of the subalgebra \( B \). 

The uniqueness of the trace functional follows from the uniqueness of the trace functional on a matrix algebra. This completes the proof of the lemma. \end{proof}

Let \( \mathbb{F} = \mathbb{R} \) be the field of real numbers, and let \( f: V \rightarrow \mathbb{R} \) be a positive definite quadratic form on a countable dimensional vector space \( V \).

Since the form \( f \) is positive definite, it follows that for any subspace \( W \subset V \) of even dimension \( 2k \), the  Clifford algebra \( \mathcal{C}\ell(W,f) \) is isomorphic to the matrix algebra \( M_{2^{k}}(\mathbb{R}) \). Consequently, the algebra \( \mathcal{C}\ell(V,f) \) is a unital locally matrix algebra over \( \mathbb{R} \).

In \cite{BezOl}, it is shown that \[ \mathcal{D}(\mathcal{C}\ell(V,f)) = \{2^n,\ n \geq 0\}. \] Hence, by Lemma \ref{lem1}, there exists a unique trace functional $ \text{tr} \colon \mathcal{C}\ell(V,f) \to \mathbb{R}. $

\begin{remark} In \cite{Bezushchak_Isom}, we introduced determinants on  unital locally matrix algebras. \end{remark}

The Clifford algebra \( \mathcal{C}\ell(V,f) \) is equipped with an involution \( \ast \) that leaves  all elements from~ \( V \) invariant:
$$ \left( \sum \alpha \, w_{1}  \cdots w_{n} \right)^{*} = \sum \alpha\, w_{n}  \cdots w_{1} , \quad \alpha \in \mathbb{R}, \quad  w_i \in V .$$

Define \( \|a\| = \text{tr}(a \cdot a^\ast). \) If \( a = \sum \alpha \, w_{1} \cdots w_{n} \), where  $\alpha \in \mathbb{R}$ and  $ w_i $ are arbitrary elements from $ V$, then
\[ \| a \| = \sum \alpha^2 \ f(w_{1}) \cdots f(w_{n}). \]
This  makes \( \mathcal{C}\ell(V,f) \) a normed algebra with \( \| a \cdot b \| \leq \| a \| \cdot \| b \| \).

Choose an orthonormal basis $ v_i,$ $ i \in \mathbb{N},$  in the vector space $V .$ As previously mentioned, choose an increasing sequence of even numbers \( 0 = n_0 < n_1 < n_2 < \cdots \). Let \( V_i , \) $i\in \mathbb{N},$ be the \( \mathbb{R} \)-linear span of \( v_{n_{i-1}+1}, \ldots, v_{n_i} \). Define
\[ c_i = v_{1} \cdots v_{n_i}, \quad A_1 = \mathcal{C}\ell(V_1,f),\quad A_i = \mathcal{C}\ell(V_i,f)_{\overline{0}} + c_i \, \mathcal{C}\ell(V_i,f)_{\overline{1}}, \quad  i \geq 2. \]
 As in Theorem \ref{TH_1} (though the field \( \mathbb{R} \) is not algebraically closed), 
\[\mathcal{C}\ell(V,f) \cong \bigotimes_{i \in \mathbb{N}} A_i, \quad A_i \cong \mathcal{C}\ell(V_i,f). \]
We identify an algebra $ A_i $ with the matrix algebra $ M_{m}(\mathbb{R}), $ where $ m = 2^{\frac{1}{2}(n_i - n_{i-1})}. $ Let $$ k = \frac{1}{2} m, \quad x_i = \begin{pmatrix} I_k & 0 \\ 0 & i\, I_k \end{pmatrix}, \quad a_i = \begin{pmatrix} 0 & I_k \\ 0 & 0 \end{pmatrix}, $$ 
where $ I_k $ is the identity matrix of order $ k.$

Let $ \varphi_i $ be the conjugation automorphism by   $x_i.$ Then $ \varphi_i(a_i) = i\, a_i. $ The sequence of automorphisms $ \varphi_1 \cdots \varphi_n,$ where $n \in \mathbb{N},$ converges in the Tykhonoff topology to an automorphism $ \varphi\in \text{Aut}(\mathcal{C}\ell(V,f)). $ Indeed, for an arbitrary element $ a \in \mathcal{C}\ell(V,f) $, the sequence $\varphi_1(\varphi_2(\ldots(\varphi_n(a))\ldots)) $, where  $ n \in \mathbb{N}, $ stabilizes. If $ a$ belongs to $ A_1 \otimes \ldots \otimes A_n ,$ then $ \varphi_1 \ldots \varphi_n(a) = \varphi_1 \ldots \varphi_m (a) $ for any $ m \geq n.$

The sequence $ \frac{1}{i} a_i $ converges to $ 0, $ whereas the sequence $ \varphi(\frac{1}{i} a_i) = \frac{1}{i} \varphi_i(a_i) = a_i $ does not.  Hence, the automorphism $ \varphi $ is not continuous.

 It would be interesting to construct such automorphisms of Clifford algebras $\mathcal{C}\ell(V,f),$   where the dimension of $ V $ is not countable, particularly for separable Hilbert spaces. In   \cite{BezOl}, we showed that the Clifford algebra of  a Hilbert space is not isomorphic to an infinite tensor product of matrix algebras.


\begin{thebibliography}{99}

	

	
\bibitem{14} O. Bezushchak, \emph{Derivations and automorphisms of locally matrix algebras}, J. Algebra  \textbf{576} (2021), 1-26. doi:10.1016/j.jalgebra.2021.02.013
	
\bibitem{Bezushchak_Isom} O. Bezushchak, \emph{Isomorphisms of groups of periodic infinite matrices}, European J. Math., \textbf{9} (2023), article number 89. doi.org/10.1007/s40879-023-00687-0. 

\bibitem{BezOl_2} O. Bezushchak, B. Oliynyk, \emph{Primary decompositions of unital locally matrix algebras}, Bull. Math. Sci.  \textbf{10}(1) (2020).	


\bibitem{BezOl} O. Bezushchak, B. Oliynyk, \emph{Unital locally matrix algebras  and Steinitz numbers},  J. Algebra Appl. \textbf{19} (2020), no.9. 


\bibitem{harpe} P.~de la Harpe, \emph{The Clifford algebra and the Spinor group of a Hilbert space}, Compositio Mathematica \textbf{25} (1972), no. 3, 245-261.
	
\bibitem{jacobson} 	N.~Jacobson, \emph{Exceptional Lie algebras}, M. Dekker Lect. Notes in Pure and Appl. Math. \textbf{1}, New York, 1971.


\bibitem{Jacobson_1} N.~Jacobson, \emph{Structure and representations of Jordan algebras}, Amer. Math. Soc.,  1968

	
\bibitem{Koethe}  G.~K\"{o}the, \emph{Schiefk\"{o}rper unendlichen Ranges uber dem Zentrum},  Math. Ann. {\bf 105} (1931), 15-39.
	
	
\bibitem{Kurosh} A. Kurosh, 	\emph{Direct decompositions of simple rings}, Rec. Math. [Mat. Sbornik] N.S. \textbf{11}(\textbf{53})(3) (1942), 245-264.

	
\bibitem{Ludewig} 	M.~Ludewig, \emph{MathOverflow}, Sept. 05, 2022.
	
\bibitem{shale_stinespring} 	D.~Shale, R.~Stinespring, \emph{States of the Clifford algebra}, Ann. Math. \textbf{80} (1964), 365-381.
	
\bibitem{Wene} G.~P.~Wene, \emph{The idempotent structure of an infinite dimensional Clifford algebra}, in Clifford algebras and their applications in mathematical physics, Fundamental Theories of Physics \textbf{47} (1992), 161-164.
	
\end{thebibliography}
\end{document}